\numberwithin{equation}{section}
\theoremstyle{plain} % style de mise en pages plain: normal (
\newtheorem{proposition}{Proposition}[section]  % Proposition 
\newtheorem{lemma}[proposition]{Lemma}
\newtheorem{corollary}[proposition]{Corollary} % idem
\newtheorem{theorem}[proposition]{Theorem} % idem
\theoremstyle{definition} % style de mise en page définition
\newtheorem{definition}[proposition]{Definition}
\newtheorem{defob}[proposition]{Definition and observation}
\newtheorem{example}[proposition]{Example} % Exemple 
\newtheorem{question}[proposition]{Question}
\newtheorem{notation}[proposition]{Notation}
\newtheorem{construction}[proposition]{Construction}
\newtheorem{notation and recalls}[proposition]{Notations and Recalls}
\newcommand\Hom{\operatorname{Hom}}
\newcommand\Rad{\operatorname{Rad}}
\newcommand\Ker{\operatorname{Ker}}
\newcommand{\id}{\operatorname{id}}
\newcommand{\xx}{\underline x}
\newcommand\Spec{\operatorname{Spec}}
\newcommand{\Cech}{{\Check {C}}_{\xx}}
\newcommand{\la}{\langle}
\newcommand{\ra}{\rangle}
\author[P.~Schenzel]{Peter Schenzel}
\title[Deligne's formula]{A note on Deligne's formula}
\address{Martin-Luther-Universit\"at Halle-Wittenberg,
Institut f\"ur Informatik, D --- 06 099 Halle (Saale), Germany}
\email{peter.schenzel@informatik.uni-halle.de}
\date{\today}
\begin{document}

\begin{abstract} 
	Let $R$ denote a 	Noetherian ring and an ideal $J \subset R$ with 
	$U = \Spec R \setminus V(J)$. For an $R$-module $M$ there is an 
	isomorphism $\Gamma(U, \tilde{M}) \cong \varinjlim \Hom_R(J^n,M)$ 
	known as Deligne's formula (see \cite[p. 217]{Hr5} and Deligne's Appendix 
	in \cite{Hr1}). We extend the 
	isomorphism for any $R$-module 
	$M$ in the non-Noetherian case of $R$ and $J = (x_1,\ldots,x_k)$ a 
	certain finitely generated ideal. Moreover, we  recall a corresponding sheaf construction.
	%the  description of $\varinjlim \Hom_R(J^n,M)$ as a sheaf.% in this situation.
\end{abstract}

\subjclass[2020]
{Primary: 13D45, 14B15; Secondary: 14F06, 13C11}
\keywords{ Deligne's formula, \v{C}ech cohomology, local cohomology, weakly pro-regular sequences}

\maketitle

\section{Introduction}
Let $R$ denote a commutative Noetherian ring, $J \subseteq R$ an ideal 
and $U = \Spec R \setminus V(J)$. For an $R$-module $M$ and its associated 
sheaf $\tilde{M}$ on $X = \Spec R$ it is known that the sheaf cohomology $H^i(U,\tilde{M})$ 
and the \v{C}ech cohomology $\check{H}^i(U,\tilde{M})$ are isomorphic for all $i 
\in \mathbb{Z}$ (see e.g. \cite[III, 4]{Hr5}). In the non-Noetherian case a corresponding  result 
holds whenever $J$ is generated by a weakly pro-regular sequence $\xx = x_1,\ldots,x_k$ 
(for the definition see \cite{SS}) and a covering of $U$ by $\Spec R \setminus V(x_i), 
i = 1,\ldots,k$. For the details we refer to \cite{Sp2} and the monograph \cite{SS}, where 
it is worked out in the frame of Commutative Algebra.

In the case of a Noetherian ring it is well-known that the global transform 
$\mathcal{D}_J(M) \cong  \varinjlim \Hom_R(J^n,M) $ is isomorphic 
to $\check{H}^0(U,\tilde{M}) \cong \Gamma( U,  \tilde{M}),  U = X \setminus V(J),$ known as Deligne's formula 
(see e.g. \cite{Hr1}, \cite{Hr5}, \cite{BrS} and \cite{StV}). 
Moreover,  we shall contribute with  a variant of Deligne's formula in the non-Noetherian case 
(generalizing arguments of \cite{Hr1} and \cite{StV}) for some particular classes of ideals 
$J$ generated by $\xx = x_1,\ldots, x_k$, where we put 
$\check{D}^0_{\xx}(M)=  \check{H}^0(U,\tilde{M})$. 
For our purposes here  let $H_i(\xx^{(n)};M), i \in \mathbb{Z}$, 
denote the $i$-th Koszul homology of $M$ with respect to $\xx^{(n)} = x_1 ^n,\ldots,x_k^n$ 
for an integer $n \geq 1$. We prove the following:

\begin{theorem} \label{thm-i}
	Let $J = (x_1,\ldots,x_k)R$ denote a finitely generated ideal in a commutative ring $R$.
	For an $R$-module $M$ there is a commutative diagram 
	\[
	\xymatrix{
		 & 	\mathcal{D}_J(M) \ar[dl]_{\theta_M} \ar[dr]^{\rho_M} & \\
			\varprojlim_{x \in J}M_x \ar[rr]^-{\sigma_M}  &  &  \hole  \check{D}^0_{\xx}(M)
	}
	\]
	with the following properties:
	\begin{itemize}
		\item[(1)] $\theta_M$ and $\rho_M$ are injective and $\sigma_M$ is an isomorphism.
		\item[(2)] $\theta_M$ resp. $\rho_M$ is an isomorphism if and only if 
		$\check{H}^1_{\xx}(\mathcal{D}_J(M)) = 0$ (see \ref{not-1} (B) for $\check{H}^1_{\xx}(\cdot)$).
		\item[(3)] $\theta_M$ resp. $\rho_M$ is an isomorphism for every $M$ 
		if and only if the inverse system $\{H_1(\xx^{(n)};R)\}_{n \geq 1}$ is pro-zero, i.e. 
		for any $n$ there is an $m \geq n$ such that %the natural map 
		$H_1(\xx^{(m)};R) \to H_1(\xx^{(n)};R)$ is zero. 
	\end{itemize}
\end{theorem}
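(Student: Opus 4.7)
My plan is to construct the three maps explicitly, deduce (1) by showing $\sigma_M$ is an isomorphism and that $\theta_M$ is injective, then identify $\rho_M$ with a canonical augmentation to obtain (2), and finally use the Koszul/\v{C}ech dictionary recalled earlier in the paper to handle (3).

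For the construction: a representative $\phi\colon J^n \to M$ of $[\phi] \in \mathcal{D}_J(M)$ is sent by $\theta_M$ to the family $(\phi(x^n)/x^n)_{x \in J}$. The $R$-linear identity $y^n\phi(x^n) = \phi(x^n y^n) = x^n\phi(y^n)$ yields compatibility in each $M_{xy}$, while $\phi(x^{n+1}) = x\phi(x^n)$ makes the assignment independent of the choice of representative $n$. I define $\sigma_M$ as the projection of a compatible family to the indices $\{x_1,\ldots,x_k\}$, and $\rho_M = \sigma_M \circ \theta_M$. To see that $\sigma_M$ is bijective, I apply the sheaf axiom for $\tilde M$ on each affine $D(x) = \Spec R_x$ with $x \in J$: since $x \in J$ is a unit in $R_x$, the elements $x_1,\ldots,x_k$ generate the unit ideal of $R_x$, so $\{D(xx_i)\}_i$ covers $D(x)$, and the sheaf axiom identifies $M_x$ with the equalizer of $\prod_i M_{xx_i} \rightrightarrows \prod_{i<j} M_{xx_ix_j}$. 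A family $(s_i) \in \check{D}^0_{\xx}(M)$ therefore extends uniquely to a compatible system $(s_x)_{x \in J}$, giving surjectivity, and the same uniqueness gives injectivity. Injectivity of $\theta_M$ (and hence of $\rho_M$) is a pigeonhole argument on $\mathcal{D}_J(M)$: if $\theta_M([\phi]) = 0$, there exist exponents $N_i$ with $\phi(x_i^{n+N_i}) = 0$, and for $m$ sufficiently large every monomial generator of $J^m$ is divisible by some $x_i^{n+\max_j N_j}$, so $R$-linearity forces $\phi|_{J^m} = 0$.

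For (2), I would identify $\rho_M$ with the natural augmentation $\eta_{\mathcal{D}_J(M)} \colon \mathcal{D}_J(M) \to \check{D}^0_{\xx}(\mathcal{D}_J(M))$. The crucial input is the canonical isomorphism $(\mathcal{D}_J(M))_{x_i} \cong M_{x_i}$: inverting $x_i \in J$ collapses $J$ into the unit ideal of $R_{x_i}$, so the directed system defining $\mathcal{D}_J$ trivializes and $\mathcal{D}_{JR_{x_i}}(M_{x_i}) = M_{x_i}$. Consequently $\check{D}^0_{\xx}(\mathcal{D}_J(M)) = \check{D}^0_{\xx}(M)$, and since $\rho_M$ is injective by (1), it is an isomorphism iff its cokernel, which in the convention of \ref{not-1}(B) is $\check{H}^1_{\xx}(\mathcal{D}_J(M))$, vanishes; the corresponding statement for $\theta_M$ then follows because $\sigma_M$ is already an isomorphism.

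For (3), the $(\Leftarrow)$ direction rests on the Koszul description of $\check{H}^1_{\xx}(N)$ as a direct limit of Koszul (co)homology groups $H_1(\xx^{(n)};N)$: pro-zeroness of $\{H_1(\xx^{(n)};R)\}_n$ together with a universal-coefficient argument (using that the Koszul complex consists of free $R$-modules) propagates to arbitrary $N$, forcing $\check{H}^1_{\xx}(N) = 0$ and in particular $\check{H}^1_{\xx}(\mathcal{D}_J(M)) = 0$ for all $M$. The $(\Rightarrow)$ direction is where I expect the real work: one has to recover the pro-zero condition from universal vanishing of $\check{H}^1_{\xx}(\mathcal{D}_J(M))$. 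My plan is to apply (2) to carefully chosen test modules built out of $R$---for instance free modules, or modules involving Koszul syzygies---so that $\check{H}^1_{\xx}(\mathcal{D}_J(-))$ directly detects the transition maps $H_1(\xx^{(m)};R) \to H_1(\xx^{(n)};R)$ of the inverse system. Making this detection rigorous without the exactness properties available in the Noetherian case is the main technical obstacle, and for it I would lean on the \v{C}ech/Koszul comparison machinery already developed earlier in the paper.
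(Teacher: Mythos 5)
Your constructions of $\theta_M,\rho_M,\sigma_M$ and your treatment of parts (1) and (2) follow essentially the paper's route (Propositions \ref{prop-1}--\ref{prop-3} and \ref{prop-4}): explicit maps, the pigeonhole injectivity argument via $J^{n+m+k}\subseteq(x_1^{n+m},\ldots,x_k^{n+m})$, the identification $\check{D}^0_{\xx}(\mathcal{D}_J(M))\cong\check{D}^0_{\xx}(M)$, and the four-term sequence of \ref{not-1}(B) applied to $\mathcal{D}_J(M)$. One small caveat in (2): your justification of $(\mathcal{D}_J(M))_{x_i}\cong M_{x_i}$ tacitly commutes localization with $\varinjlim\Hom_R(J^n,-)$, which requires the $J^n$ to be finitely \emph{presented}, not just finitely generated; the paper instead localizes the exact sequence $0\to\Gamma_J(M)\to M\to\mathcal{D}_J(M)\to H^1_J(M)\to 0$ at $x\in J$ and uses that the outer terms are $J$-torsion. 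This is easily repaired.

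Part (3), however, has genuine gaps. In the $(\Leftarrow)$ direction you claim that pro-zeroness of $\{H_1(\xx^{(n)};R)\}$ propagates to arbitrary $N$ and forces $\check{H}^1_{\xx}(N)=0$. That statement is false: already for $R=\Bbbk[x]$, $\xx=x$ (Noetherian, hence weakly pro-regular and a fortiori weakly secant) one has $\check{H}^1_{x}(R)=R_x/R\neq 0$. What is true (Lemma \ref{lem-1}) is that weak secance is equivalent to $\check{H}^1_{\xx}(I)=0$ for \emph{injective} $I$, and the paper exploits exactly this: take $0\to M\to I^0\to I^1$ an injective copresentation; since $H^1_J(I^j)=0$ and $\check{H}^1_{\xx}(I^j)=0$, the maps $\theta_{I^j}$ are isomorphisms, and left exactness of both $\mathcal{D}_J(-)$ and $\varprojlim_{x\in J}(-)_x$ transfers the isomorphism to $\theta_M$. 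In the $(\Rightarrow)$ direction you give only a plan, explicitly acknowledging that the detection of the transition maps $H_1(\xx^{(m)};R)\to H_1(\xx^{(n)};R)$ is the unresolved obstacle; moreover your proposed test modules (free modules, Koszul syzygies) are not the ones that make the argument work. The right test objects are again injective modules: for injective $I$ the diagram of Lemma \ref{lem-2} gives $\operatorname{Coker}\rho_I\cong\check{H}^1_{\xx}(I)/H^1_J(I)=\check{H}^1_{\xx}(I)$, so surjectivity of $\rho_I$ for all injective $I$ is equivalent to $\check{H}^1_{\xx}(I)=0$ for all injective $I$, which Lemma \ref{lem-1} converts into the pro-zero condition on $\{H_1(\xx^{(n)};R)\}$. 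Without this reduction to injectives your part (3) does not close.
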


Note that, if $M$ is a Noetherian module, then $\{H_i(\xx^{(n)};M)\}_{n \geq 1}$ is pro-zero 
for any system of elements $\xx$ and $i > 0$ as easily seen (see also \cite{Sp2}). 
Note that $\rho_M$ is in general not onto, 
even for an injective $R$-module $M$ (see Example \ref{ex-1}). The study of inverse systems 
that are pro-zero was initiated by Greenlees and May (see \cite{GM}) and Alonso Tarr\'io, Jerem\'ias L\'opez, Lipman (see 
\cite{ALL1}). For a further discussion about their notation we refer to \cite{Sp12} and to \cite{SS}. 
See also \ref{def-1} for the notion of weakly pro-regular sequences and related subjects. Moreover,  we present a description of $\check{D}^0_{\xx}(M)$ as the sheaf $\tilde{M}(U)$ (see Section 5 
for the details).

In Section 2 we recall some results about 
\v{Cech} and Koszul complexes needed in order to describe some properties of 
sequences generalizing those of regular sequences. In Section 3 we derive 
the homomorphisms of the above diagram. To this end we recall some constructions 
known in the case of Noetherian rings and modify them in the general case, 
not available in this form before. This is necessary 
for the proof of the main results done in Section 4. We continue in  Section 4 with a necessary 
and sufficient homological condition for $\rho_M$ to become an isomorphism and an 
Example clarifying the necessary conditions in \ref{thm-i}. In our notation we follow (with some minor 
differences) those of \cite{SS}. Moreover \cite{SS} is our basic reference. 

\section{Recalls about Sequences}
At the beginning let us fix some notation.  In the following we shall 
use these notations without further reference. Let $R$ denote a commutative 
ring and let $M$ denote an $R$-module. For a sequence of elements $\xx 
= x_1, \ldots, x_k$ and an integer $ n > 0$ put $\xx^{(n)} = x_1,^n\ldots,x_k^n$. 
Moreover, let $J = (x_1,\ldots,x_k)R$ the ideal generated by the sequence 
$\xx$. 

\begin{notation} \label{not-1}
	(A) We denote by $K_{\cdot}(\xx^{(n)};M)$ the 
	Koszul complex  of $M$ with respect 
	to the sequence $\xx^{(n)}$. For an integer $i$ let 
	$H_i(\xx^{(n)};M)$  denote the $i$-th 
	Koszul homology.  
	For two positive integers $m \geq n$ there are natural maps 
	$K_{\cdot}(\xx^{(m)};M) \to K_{\cdot}(\xx^{(n)};M)$ that induces homomorphisms on the corresponding homology modules. For each integer $i$ they induce inverse systems 
	$\{H_i(\xx^{(n)};M)\}_{n \geq1}$.\\
	(B) Let $K^{\cdot}(\xx^{(n)};M)$ denote the Koszul co-complex and $H^i(\xx^{(n)};M)$ its cohomology. It is known that $\varinjlim K^{\cdot}(\xx^{(n)};M) \cong \check{C}_{\xx}(M)$,  the \v{C}ech 
	complex with respect to $\xx$ (see e.g. \cite{SS}). That is, 
	\[
		\Cech(M) : 
		0\to M \stackrel{d^0}{\longrightarrow} \oplus_{i=1}^r M_{x_i} 
		\stackrel{d^1}{\longrightarrow} \oplus_{1 \leq i < j \leq k} M_{x_ix_j}
		\stackrel{d^2}{\longrightarrow} \ldots \to M_{x_1\cdots x_k} \to 0.
	\]
	We call $\check{H}^i_{\xx}(M) = H^i(\check{C}_{\xx}(M)), i \in 
	\mathbb{Z}$, the \v{C}ech cohomology of $M$ with respect to $\xx$.
	Moreover, let $\check{D}_{\xx}(M)$ denote 
	the global \v{C}ech complex given by 
	\[
	\check{D}_{\xx} (M) : 0 \to 
	\oplus_{i=1}^r M_{x_i} 
	\stackrel{d^1}{\longrightarrow} \oplus_{1 \leq i < j \leq k} M_{x_ix_j}
	\stackrel{d^2}{\longrightarrow} \ldots \to M_{x_1\cdots x_k} \to 0
	\]
	(see also \cite[6.1]{SS}). There is a short exact sequence 
	$0 \to \check{D}_{\xx} (M)[-1] \to \Cech(M) \to M \to 0$ of complexes that induces an exact 
	sequence 
	$0 \to \check{H}^0_{\xx}(M) \to M \to \check{D}^0_{\xx}(M) \to \check{H}^1_{\xx}(M) \to 0$ 
	of $R$-modules,
	where we abbreviate $\check{D}^0_{\xx}(M) := H^0( \check{D}_{\xx} (M))$. \\
	(C) Moreover let $\mathcal{D}_J(M) = \varinjlim \Hom_R(J^n,M)$ be  the ideal transform of 
	$M$ with respect to $J$. We refer to \cite{BrS}, \cite[Chaper 12, Section 5]{SS} and to 
	\cite{SS1} for more details.  There is a natural homomorphism $\tau_M: M\to \mathcal{D}_J(M)$ and a short exact sequence 
	\[
	0 \to \Gamma_J(M) \to M \stackrel{\tau_M}{\longrightarrow} 
	\mathcal{D}_J(M) \to H^1_J(M) \to 0, 
	\]
	where $\Gamma_J(\cdot)$ denotes the $J$-torsion functor and $H^i_J(\cdot)$ its right derived 
	functors, the local cohomology functors
\end{notation}

In the next we shall summarize some properties of sequences $\xx = x_1,\ldots,x_k$ partially needed 
in the sequel. 

\begin{definition} \label{def-1}
	(A) A sequence $\xx = x_1,\ldots,x_k$ 
	is called $M$-weakly pro-regular, if for all $i > 0$ the inverse system $\{H_i(\xx^{(n)};M)\}_{n \geq 0}$ 
	is pro-zero, where $H_i(\xx^{(m)};M) \to H_i(\xx^{(n)};M), m \geq n,$ denotes the natural 
	map induced by the Koszul complexes.  That is, for each integer $n$ there is an 
	integer $m \geq n$ such that the map $H_i(\xx^m;M) \to H_i(\xx^n;M)$ is zero. We 
	call $\xx$ weakly pro-regular if it is $R$-weakly pro-regular. 
	The first  study of weakly pro-regular sequences has been done in \cite{Sp2}. \\
	(B) In regard to \cite[Sect. 9, 6, def. 2]{Bn2} we call a sequence $\xx$ weakly secant 
	if the inverse system $\{H_1(\xx^{(n)};R)\}_{n \geq 1}$ is pro-zero. Therefore, a weakly 
	pro-regular sequence is weakly secant too. The converse does not hold. Moreover, if $R$ 
	is a Noetherian ring, then any sequence $\xx$ is weakly pro-regular. 
\end{definition}

In the following we give a characterization when $\xx$ is  weakly secant. This follows by 
some adaptions of arguments given in \cite[7.3.3]{SS} and \cite[Proposition 5.3]{Sp14}. 
For the ring $R$ let $R[T]$ denote the polynomial module in one variable. It is a free $R$-module with basis $\mathbb{N}$.

\begin{lemma} \label{lem-1}
	Let $\xx= x_1,\ldots,x_k$ denote a system of elements in $R$. Then the following conditions are equivalent:
	\begin{itemize}
		\item[(i)] $\xx$ is weakly secant.
		\item[(ii)] $\{H_1(\xx^{(n)}; F)\}_{n \geq 1}$ is pro-zero for any flat $R$-module $F$.
		\item[(iii)] $\varinjlim H^1(\xx^{(n)};I) = 0$ for any injective $R$-module $I$.
		\item[(iv)] $\check{H}^1_{\xx}(I) = 0$ for any injective $R$-module $I$. 
		\item[(v)] $\varprojlim H_1(\xx^{(n)};R[T]) = \varprojlim^1 H_1(\xx^{(n)};R[T]) = 0$. 
	\end{itemize}
\end{lemma}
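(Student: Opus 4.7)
My plan is to prove the equivalences by treating (i) $\Leftrightarrow$ (ii), (iii) $\Leftrightarrow$ (iv), and (i) $\Leftrightarrow$ (iii) as formal consequences of Koszul (co)complex manipulations, and then establishing (i) $\Leftrightarrow$ (v) by an explicit argument using $R[T]$, which constitutes the main obstacle. For (i) $\Leftrightarrow$ (ii): since $K_{\cdot}(\xx^{(n)}; R)$ is a bounded complex of finitely generated free modules, flat base change gives $H_1(\xx^{(n)}; F) \cong H_1(\xx^{(n)}; R) \otimes_R F$, and pro-zero transition maps stay zero after tensoring; the converse is the case $F = R$. For (iii) $\Leftrightarrow$ (iv), I would use $\Cech(M) = \varinjlim_n K^{\cdot}(\xx^{(n)}; M)$ together with exactness of filtered colimits to obtain $\check{H}^1_{\xx}(M) = \varinjlim_n H^1(\xx^{(n)}; M)$. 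For (i) $\Leftrightarrow$ (iii), when $I$ is injective the exactness of $\Hom_R(-, I)$ combined with $K^{\cdot}(\xx^{(n)}; I) = \Hom_R(K_{\cdot}(\xx^{(n)}; R), I)$ yields $H^1(\xx^{(n)}; I) \cong \Hom_R(H_1(\xx^{(n)}; R), I)$; pro-zero forces colimit vanishing, and conversely, for fixed $n_0$ I would plug the inclusion $H_1(\xx^{(n_0)}; R) \hookrightarrow E(H_1(\xx^{(n_0)}; R))$ into the injective hull into the hypothesis, where injectivity of that inclusion forces some transition $H_1(\xx^{(m)}; R) \to H_1(\xx^{(n_0)}; R)$ to be zero.

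The implication (i) $\Rightarrow$ (v) is immediate: by (ii) applied to the flat module $R[T]$, the system $\{H_1(\xx^{(n)}; R[T])\}$ is pro-zero, hence both $\varprojlim$ and $\varprojlim^1$ vanish. For the converse (v) $\Rightarrow$ (i), set $A_n = H_1(\xx^{(n)}; R)$. The split injection $R \hookrightarrow R[T]$ (with constant-term retraction) gives $\varprojlim_n A_n \hookrightarrow \varprojlim_n A_n[T] = 0$, so $\varprojlim_n A_n = 0$. Assume for contradiction that $\{A_n\}$ is not pro-zero; after shifting indices, I may suppose that at level $0$ the composition $\pi^m \colon A_m \to A_0$ is non-zero for every $m$, and pick $x_m \in A_m$ with $y_m := \pi^m(x_m) \neq 0$. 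The map $\psi_T \colon \prod_n A_n[T] \to \prod_n A_n[T]$, $\alpha \mapsto (\alpha_n - \pi_n(\alpha_{n+1}))$, has cokernel $\varprojlim\nolimits^1_n A_n[T] = 0$ and so is surjective; applying surjectivity to $\beta := (x_n T^n)_n$ produces $\alpha = (\alpha_n)$ with $\alpha_n = \sum_j a_{n,j} T^j$ satisfying $a_{n,j} - \pi_n(a_{n+1,j}) = x_n$ when $j = n$ and $0$ otherwise. For each fixed $j$, the tail $(a_{n,j})_{n \geq j+1}$ is coherent, hence defines an element of the cofinal subsystem of $\varprojlim_n A_n = 0$, forcing $a_{n,j} = 0$ for all $n > j$. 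In particular $a_{j+1,j} = 0$, so $a_{j,j} = x_j$ and $a_{0,j} = \pi^j(x_j) = y_j$. But $\alpha_0 \in A_0[T]$ has some bounded degree $D_0$, so $y_j = a_{0,j} = 0$ for $j > D_0$, contradicting $y_j \neq 0$.

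The main obstacle is clearly (v) $\Rightarrow$ (i). The delicate idea is to encode the hypothetical failure of pro-zero in the single element $\beta_n = x_n T^n$ of $\prod_n A_n[T]$: the indeterminate $T$ couples the level $n$ to the monomial $T^n$ along the diagonal, so that solving the telescoping system in \emph{polynomials} (rather than in formal power series) forces almost all $y_j$ to vanish. This converts the failure of pro-zero into a concrete non-preimage witness for $\psi_T$, producing the required element of $\varprojlim\nolimits^1_n A_n[T]$ and contradicting (v).
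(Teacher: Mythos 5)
Your proof is correct and follows essentially the same route as the paper: the equivalence of (i)--(iv) via flat base change and Koszul self-duality against injectives (with the injective-hull trick for the converse), (i) $\Rightarrow$ (v) by specializing (ii) to the free module $R[T]$, and (v) $\Rightarrow$ (i) via the polynomial-ring argument. The only real difference is that the paper disposes of every step by citation (the equivalence of (i)--(iv) to earlier characterizations of weakly pro-regular sequences, and (v) $\Rightarrow$ (i) to Emmanouil), whereas you supply the details; in particular your telescoping argument -- encoding the failure of pro-zero in $\beta=(x_nT^n)_n$, using $\varprojlim{}^1 A_n[T]=0$ to solve $\alpha_n-\pi_n(\alpha_{n+1})=x_nT^n$, and exploiting that $\alpha_0$ has bounded degree -- is a correct, self-contained rendering of the result the paper only cites.
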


\begin{proof}
	The equivalence of the first four conditions is a particular case of \cite[7.3.3]{SS} or 
	\cite[Lemma 2.4]{Sp2} for $i = 1$, 
	where weakly pro-regular sequences are characterized.  Next we show (ii) $\Longrightarrow$ 
	(v).  For $F = R[T]$ we get that $\{H_1(\xx^{(n)};R[T])\}_{n \geq 1}$ is pro-zero and (v) follows 
	(see e.g. \cite[1.2.4]{SS}). The implication (v) $\Longrightarrow$ (i) is true by \cite{Ei}  
	(see also the argument in the  paper \cite{Sp14}).
\end{proof}

\section{The Homomorphisms}
In the following we recall the homomorphisms of the diagram in the Introduction. Moreover we add  
a few comments. To this end we use the previous notation. 

\begin{proposition} \label{prop-1}
	($\rho_M$) We define 
	\[
	\rho_M : \mathcal{D}_J(M) \to \check{D}^0_{\xx}(M), \; \phi \mapsto (\phi_n(x_i^n)/x_i^n)_{i=1}^k,
	\]
	where $\phi_n \in \Hom_R(J^n,M)$ is a representative of $\phi \in \varinjlim \Hom_R(J^n,M)$. 
	Then $\rho_M$ is injective. 
\end{proposition}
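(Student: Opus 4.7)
The plan is to establish $\rho_M$ in three steps: (a) verifying well-definedness on $\mathcal{D}_J(M) = \varinjlim_n \Hom_R(J^n,M)$ (independence of the choice of representative $\phi_n$), (b) checking that the tuple $(\phi_n(x_i^n)/x_i^n)_{i=1}^k$ lies in $\check{D}^0_{\xx}(M) \subseteq \oplus_{i=1}^k M_{x_i}$, and (c) proving injectivity. The $R$-linearity of $\rho_M$ will then follow directly from the formula.

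For (a) I would argue that if $\phi_n \in \Hom_R(J^n,M)$ and $\phi_m \in \Hom_R(J^m,M)$ (say $m \geq n$) represent the same class, they agree after restriction to $J^l$ for some $l \geq m$; then $R$-linearity gives $\phi_n(x_i^l) = x_i^{l-n}\phi_n(x_i^n)$, so $\phi_n(x_i^l)/x_i^l = \phi_n(x_i^n)/x_i^n$ in $M_{x_i}$ and analogously for $\phi_m$, producing the same tuple. Claim (b) will then be immediate from the identity $x_j^n \phi_n(x_i^n) = \phi_n(x_i^n x_j^n) = x_i^n \phi_n(x_j^n)$, which yields $\phi_n(x_i^n)/x_i^n = \phi_n(x_j^n)/x_j^n$ in $M_{x_i x_j}$ for every pair $i < j$, exactly the condition for membership in $\check{D}^0_{\xx}(M)$.

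For (c), suppose $\rho_M(\phi) = 0$ with representative $\phi_n$. For each $i$ there is $s_i \geq 0$ with $\phi_n(x_i^{n+s_i}) = x_i^{s_i}\phi_n(x_i^n) = 0$; taking $s = \max_i s_i$ yields $\phi_n(x_i^{n+s}) = 0$ for all $i$. The key step will be the pigeonhole inclusion $J^{k(n+s)} \subseteq (x_1^{n+s},\ldots,x_k^{n+s})R$: any monomial generator $x_1^{a_1}\cdots x_k^{a_k}$ of $J^{k(n+s)}$ satisfies $\sum a_i = k(n+s)$, so at least one $a_i \geq n+s$. Consequently every element of $J^{k(n+s)}$ is an $R$-linear combination of the $x_i^{n+s}$, so $\phi_n$ vanishes on $J^{k(n+s)}$ by $R$-linearity, and $\phi = 0$ in $\mathcal{D}_J(M)$. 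I expect this pigeonhole inclusion to be the main (though modest) obstacle, since it is where the finite generation of $J$ enters essentially; every other step is $R$-linearity chased through the direct limit.
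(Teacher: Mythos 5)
Your proof is correct and follows essentially the same route as the paper: well-definedness from agreement of representatives on a smaller power of $J$, the cocycle condition from $x_j^n\phi_n(x_i^n)=\phi_n(x_i^nx_j^n)=x_i^n\phi_n(x_j^n)$, and injectivity from a pigeonhole inclusion of a power of $J$ into $(x_1^{n+s},\ldots,x_k^{n+s})R$. If anything, your exponent $k(n+s)$ is the correct one for that inclusion (the paper's stated $J^{n+m+k}\subseteq(x_1^{n+m},\ldots,x_k^{n+m})R$ needs the exponent enlarged to roughly $k(n+m)$ for $k\geq 2$), so your version is the more careful of the two.
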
 

\begin{proof}
	It is immediate to see that $\rho_M(\phi)$ does not depend upon the representative $\phi_n$. 
	Moreover we have $\phi_n(x_i^n)/x_i^n = \phi_n(x_j^n)/x_j^n$ for all $i,j$. Therefore 
	$\rho_M(\phi) \in \Ker d^1 = \check{D}^0_{\xx}(M).$ Now let $\rho_M(\phi)  = 0$ and therefore 
	$\phi_n(x_i^n)/x_i^n =0$ for all $i = 1,\ldots,k$. That is $x_i^m \phi_n(x^n_i) = 0$ for some $m \geq 0$. 
	Because of $J^{n+m+k} \subseteq \xx^{n+m}R$ it follows that $\phi_n(J^{n+m+k}) = 0$. 
	Whence $\phi_n |_{J^{n+m+k}} = 0$ and therefore $\phi = 0$.  
\end{proof}

\begin{construction} \label{xyz}
Let $J \subset R$ denote an ideal. For an $R$-module $M$  we look at the system of localizations 
$\{M_x\}_{x \in J}$. For elements $x,y \in J$ we define a partial order $y \geq x$ whenever 
$x \in \Rad yR$, i.e., $x^k = y r$ for some $r \in R$. Then we define a homomorphism $\alpha_{x,y} : M_y\to M_x$ by 
\[
\alpha_{x,y} : M_y \to M_x, \; m/y^n \mapsto r^nm/x^{kn}.
\]
This is well-defined as easily seen. Moreover, if $x \in \Rad yR$ and $y \in \Rad zR$, 
then  it follows that 
$\alpha_{x,y} \cdot \alpha_{y,z} = \alpha_{x,z}$ and $\alpha_{x,x}  = \id_{M_x}$. So that 
 $\{M_x\}_{x \in J}$ with the homomorphisms $\alpha_{x,y}$ 
forms an inverse system.  Moreover, let 
$\alpha_x : \varprojlim_{x \in J} M_x  \to M_x$ denote the canonical map.
Note that for fractions $m/x_i^a$ and $n/x_j^{b}$ we use often the same exponent 
for the denominator. 
\end{construction}

\begin{proposition} \label{prop-2}
	($\theta_M$) We define 
	\[
	\theta_M: \mathcal{D}_J(M) \to \varprojlim_{x \in J} M_x, \; \phi \mapsto (\phi_n(x^n)/x^n)_{x\in J},
	\]
	where $\phi_n \in \Hom_R(J^n,M)$ is a representative of $\phi \in \varinjlim \Hom_R(J^n,M)$.
	Then $\theta_M$ is injective. 
\end{proposition}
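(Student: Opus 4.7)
My plan is to verify three things: that the formula $\phi \mapsto (\phi_n(x^n)/x^n)_{x\in J}$ is independent of the representative $\phi_n$, that the resulting tuple actually lies in the inverse limit (i.e.\ is compatible with the transition maps $\alpha_{x,y}$ from Construction \ref{xyz}), and finally that the resulting well-defined map is injective. Injectivity I expect to reduce to Proposition \ref{prop-1}.

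For the first point, suppose $\phi \in \mathcal{D}_J(M) = \varinjlim \Hom_R(J^n,M)$ is represented both by $\phi_n$ and by $\phi'_{n'}$. By the construction of the direct limit these restrict to the same element in $\Hom_R(J^N,M)$ for some $N \geq n, n'$. For any $x \in J$ we have $x^N \in J^N$, so applying either restriction to $x^N$ yields the same element of $M$, and by $R$-linearity $\phi_n(x^N)/x^N = x^{N-n}\phi_n(x^n)/x^N = \phi_n(x^n)/x^n$ in $M_x$, so the $x$-coordinate of $\theta_M(\phi)$ is unambiguous.

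For compatibility with the inverse system, I would fix $y \geq x$ in $J$, so $x^k = ry$ for some $r \in R$, and compute directly using the definition of $\alpha_{x,y}$:
\[
\alpha_{x,y}\bigl(\phi_n(y^n)/y^n\bigr) = r^n\phi_n(y^n)/x^{kn} = \phi_n(r^n y^n)/x^{kn} = \phi_n(x^{kn})/x^{kn},
\]
where the last equality uses $(ry)^n = r^n y^n = x^{kn}$. Then $\phi_n(x^{kn}) = x^{kn-n}\phi_n(x^n)$ by $R$-linearity, so $\phi_n(x^{kn})/x^{kn} = \phi_n(x^n)/x^n$ in $M_x$, matching the $x$-coordinate prescribed by $\theta_M$. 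This is the step I expect to be the most delicate, since one has to keep the parameter $n$ uniform across all $x \in J$ (as the author's parenthetical remark anticipates) and check that the Koszul-type definition of $\alpha_{x,y}$ interacts correctly with the partial order $y \geq x$ defined by $x \in \Rad(yR)$.

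For injectivity, observe that each generator $x_i$ belongs to $J$, so the canonical projection $\alpha_{x_i}\colon \varprojlim_{x \in J} M_x \to M_{x_i}$ sends $\theta_M(\phi)$ to $\phi_n(x_i^n)/x_i^n$. If $\theta_M(\phi) = 0$ then these $k$ coordinates vanish simultaneously, which is exactly the hypothesis used in the proof of Proposition \ref{prop-1}: choosing $m$ uniformly so that $x_i^m\phi_n(x_i^n)=0$ for all $i$, the containment $J^{n+m+k} \subseteq (x_1^{n+m},\ldots,x_k^{n+m})R$ forces $\phi_n|_{J^{n+m+k}} = 0$, hence $\phi = 0$ in the direct limit. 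Thus $\theta_M$ is injective.
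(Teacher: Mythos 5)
Your proof is correct and takes essentially the same route as the paper's: define the map coordinatewise, check independence of the representative and compatibility with the transition maps $\alpha_{x,y}$ so that the universal property of the inverse limit applies, and reduce injectivity to the argument of Proposition \ref{prop-1} via the coordinates at the generators $x_i$. The only difference is that you spell out the verifications the paper dismisses as ``well defined'' and ``as easily seen.''
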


\begin{proof}
	For any $x \in J$ and $\phi \in \mathcal{D}_J(M)$ let  $\phi_n \in \Hom_R(J^n,M)$ be a representative  of $\phi$. Then we define $\phi \mapsto \phi_n(x^n)/x^n \in M_x$,
	 which is well defined.  This is compatible with the map $\alpha_{x,y} : M_y \to M_x$ with 
	 $x \in \Rad yR$, say $x^k = yr$ as easily seen. By the universal property of inverse limits 
	 there is a homomorphism $\theta_M: \mathcal{D}_J(M) \to \varprojlim_{x \in J}M_x$. 
	 The injectivity of $\theta_M$ follows as in the proof of \ref{prop-1}.
\end{proof}

Now we are going on to construct the final morphism in the diagram of \ref{thm-i}. 

\begin{proposition} \label{prop-3}
	($\sigma_M$) By the above notation let $\alpha_x : \varprojlim_{x \in J} M_x \to M_x$ be the canonical map. We define 
	\[
	\sigma_M : \varprojlim_{x \in J} M_x \to \check{D}^0_{\xx}(M), \; f \mapsto (m_i/x_i^n)_{i=0}^k,  
	\text{ where } \alpha_{x_i}(f) = m_i/x_i^n, \; i = 1,\ldots,k.
	\]
	Moreover $\sigma_M$ 
	is an isomorphism and $\rho_M = \sigma_M \circ \theta_M$.
\end{proposition}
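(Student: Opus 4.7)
The plan is to establish three things in order: well-definedness of $\sigma_M(f) \in \check{D}^0_{\xx}(M)$, the factorization $\rho_M = \sigma_M \circ \theta_M$, and bijectivity of $\sigma_M$. For well-definedness, I would observe that in the order of \ref{xyz} one has $x_i \geq x_ix_j$ and $x_j \geq x_ix_j$ (from $(x_ix_j)^1 = x_i \cdot x_j \in x_iR \cap x_jR$), so the inverse-system compatibility of $f$ forces $\alpha_{x_i}(f)$ and $\alpha_{x_j}(f)$ to have equal images in $M_{x_ix_j}$; hence $\sigma_M(f)$ lies in $\ker d^1 = \check{D}^0_{\xx}(M)$. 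The factorization $\rho_M = \sigma_M \circ \theta_M$ is then immediate by specializing $x = x_i$ in the formula for $\theta_M$.

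For injectivity of $\sigma_M$, I would mirror the pattern of \ref{prop-1}. Suppose $\alpha_{x_i}(f) = 0$ for every $i$, and let $x \in J$. Using $x \geq xx_i$ and $x_i \geq xx_i$ together with compatibility, the image of $\alpha_x(f) = a/x^n$ in each $M_{xx_i}$ must vanish; clearing denominators yields an integer $p$ with $x^p x_i^p a = 0$ for $i = 1,\ldots,k$. Since $x \in J$, a pigeonhole on the multinomial expansion of $x = \sum a_j x_j$ gives $x^{kp} \in (x_1^p,\ldots,x_k^p)R$, so $x^{(k+1)p}a = 0$ and $\alpha_x(f) = 0$.

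The harder direction is surjectivity. Given $(m_i/x_i^n) \in \check{D}^0_{\xx}(M)$, I would first absorb the annihilators coming from $m_i/x_i^n = m_j/x_j^n$ in $M_{x_ix_j}$: by enlarging $n$ and replacing $m_i$ with $x_i^p m_i$ for a common $p$, one arranges $x_j^n m_i = x_i^n m_j$ in $M$ for every pair $i<j$. For each $x \in J$, choose a presentation $x^{kn} = \sum b_i x_i^n$ (by the same pigeonhole) and set $f_x := \sum b_i m_i / x^{kn} \in M_x$. Showing that $f_x$ is independent of the $b_i$'s reduces to the following Koszul-theoretic input: if $\sum c_i x_i^n = 0$, then $(c_i)$ is a $1$-cycle in $K_{\cdot}(\xx^{(n)};R)$, and because each $x_i^n$ is null-homotopic on the Koszul complex, the ideal $(x_1^n,\ldots,x_k^n)R \supseteq J^{kn}$ annihilates $H_1(\xx^{(n)};R)$. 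Hence $x^{kn}\cdot (c_i)$ is a $1$-boundary, and mapping to $M$ via the normalized compatibility $x_j^n m_i = x_i^n m_j$ yields $x^{kn}\sum c_i m_i = 0$. An analogous argument handles independence of $n$, and the compatibility $\alpha_{x,y}(f_y) = f_x$ for $y \geq x$ with $x^a = yr$ follows from the computation $x^{akn} = (yr)^{kn} = r^{kn} \sum b_i x_i^n$. Finally, $\sigma_M(f) = (m_i/x_i^n)$ comes from the natural presentation $x_i^{kn} = x_i^{(k-1)n}\cdot x_i^n$.

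The main obstacle is the well-definedness of $f_x$: all the rest is routine bookkeeping with partial orders and localizations, but constructing a compatible preimage rests on the Koszul fact that powers of $J$ kill $H_1(\xx^{(n)};R)$ --- a statement that holds unconditionally in $R$, which is precisely why $\sigma_M$ (unlike $\theta_M$ and $\rho_M$) is always an isomorphism.
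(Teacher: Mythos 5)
Your proposal is correct and, in its skeleton, is the same as the paper's proof: the same normalization $x_j^{n}m_i=x_i^{n}m_j$ after clearing denominators, the same pigeonhole presentation $x^{kn}=\sum b_ix_i^{n}$ used to manufacture the component of the inverse-limit element at $x$, and the same compatibility and injectivity checks. The one place where you genuinely diverge is the well-definedness of $f_x=\sum b_im_i/x^{kn}$: you invoke the fact that $(x_1^n,\ldots,x_k^n)R$ annihilates $H_1(\xx^{(n)};R)$ (null-homotopy of multiplication by $x_i^n$ on the Koszul complex), so that $x^{kn}\cdot(c_i)$ is a boundary whenever $\sum c_ix_i^n=0$, and then the normalized relations kill the resulting sum. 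The paper instead gets well-definedness for free from the single identity $x_i^{c+n}m_y=y^{d}m_i'$ for all $i$: any ambiguity in $m_y$ is annihilated by every $x_i^{c+n}$, hence by $y^{d}$, hence vanishes in $M_y$. Both arguments are sound; yours isolates the underlying Koszul mechanism (and correctly explains why $\sigma_M$ is an isomorphism unconditionally, unlike $\theta_M$ and $\rho_M$), while the paper's is shorter and avoids any homological input. Your closing remark on $\sigma_M(f)=(m_i/x_i^n)$ via the trivial presentation $x_i^{kn}=x_i^{(k-1)n}x_i^n$ and the compatibility computation with $x^a=yr$ also match the paper's verification.
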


\begin{proof}
	First note that $\alpha_{x_ix_j,x_i}(\alpha_{x_i}(f)) = \alpha_{x_ix_j,x_j}(\alpha_{x_j}(f)) = \alpha_{x_ix_j}(f)$ for all pairs $i,j$. 
	This yields that $m_i/x_i^n = (x_j^nm_i)/(x_ix_j)^n = (x_i^nm_j)/(x_ix_j)^n = m_j/x_j^n$ and 
	$\sigma_M(f) \in  \check{D}^0_{\xx}(M)$. Let $\sigma_M(f) = 0$  for some $f \in \varprojlim  M_x$ and therefore 
	$\alpha_{x_i}(f) = m_i/x_i^n = 0$ for  $i = 1,\ldots,k$. Let $x \in J$ and $\alpha_x(f) = m/x^n$, 
	where $n$ can be chosen independently of $i,j$ and $x$.  Since $x^nm_i/(xx_i)^n =
	 x_i^nm/(xx_i)^n$ and $m_i/x_i^n = 0$, it follows that $\alpha_x(f) = m/x^n =0$ for all $x \in J$ 
	and $f = 0$, so $\sigma_M$ is injective.  
	
	Now let $(m_i/x_i^n)_{i=1}^k \in \check{D}^0_{\xx} (M)
	 = \Ker d^1$ and therefore $m_i/x_i^n = m_j/x_j^n$ for all $i,j$. That is, $(x_ix_j)^c x_j^n m_i= (x_ix_j)^c x_i^n m_j$ for some integer $c$ and $x_j^{c+n}m_i' = x_i^{c+n}m_j'$ with $m_l' =
	  x_l^cm_l$ and $m_l/x^n_l = m_l'/x_l^{c+n}, l = 1,\ldots,k$. Now choose an element  $y \in J$ and an integer $d$ such that $y^d = \sum_{i=1}^{k} r_ix_i^{c+n}$ since $y \in \Rad (x_1^{c+n}, \ldots, x_k^{c+n})$. We define 
	$m_y = \sum_{j=1}^{k} r_jm_j'$. Then 
	\[
	x_i^{c+n} m_y =  \small{\sum}_{j=1}^{k} x_i^{c+n}r_j m_j' =  \small{\sum}_{j=1}^{k} r_jx_j^{c+n}m_i' 
	= y^d m_i'
	\]
	and therefore $m_y/y^d = m_i/x_i^n$ for $i= 1,\ldots,k$. In order to show that  $(m_y/y^d)_{y \in J}$ defines an element $f \in \varprojlim_{x \in J} M_x$ such that $\sigma_M(f) = (m_i/x_i^n)_{i=1}^k$ let 
	$z \in J$  with $z^f = yr$, where $m_z/z^e$ is chosen as before. By the equation  above  
	$x_i^{c+n}r^d m_y = z^{fd}m_i'$ and therefore 
	$r^dm_y/z^{df} = m_i/x_i^n = m_z/z^e$ for $i = 1,\ldots,k$. That is, $\alpha_{z,y}(m_y/y^d) =
	 m_z/z^e$,  whence $f$ is an element of the inverse limit. The final claim is obvious.
\end{proof}

\begin{question} \label{question}
	Fix the previous notation. What is a necessary and sufficient condition on the sequence 
	$\xx = x_1,\ldots,x_k$ such that the homomorphisms 
	$$
	\rho_M: \mathcal{D}_J(M) \to \check{D}^0_{\xx}(M) \text{ and }
	\theta_M: \mathcal{D}_J(M) \to \varprojlim{}_{x \in J} M_x, \; \phi \mapsto (\phi_n(x^n)/x^n)_{x \in J}
	$$
	in \ref{prop-1} and in  \ref{prop-2} become  isomorphisms for an $R$-module $M$? 
	For an answer see \ref{prop-4} below.
\end{question}

\section{Proof of the Main Results}
In the following we shall discuss when the $R$-homomorphism $\rho_M : \mathcal{D}_J(M) \to \check{D}^0_{\xx}(M)$ is onto. To this end we need a technical result. 

\begin{lemma} \label{lem-2}	
	Let $\xx$ and $J = {\xx}R$ be as above. Let $M$ denote an $R$-module.
		 There is a commutative diagram with exact rows
		\[
		\xymatrix{
			0 \ar[r] & \Gamma_J(M) \ar[r]  \ar@2{-}[d] & M \ar[r] \ar@2{-}[d] & \mathcal{D}_J(M) 
			\ar[r] \ar@{^{>}->}[d]^{\rho_M} & H^1_J(M )\ar[r] \ar@{^{>}->}[d] & 0 \\
			0 \ar[r] &   \check{H}^0_{\xx}(M) \ar[r]  & M \ar[r]  &\check{D}^0_{\xx}(M)
			\ar[r] &  \check{H}^1_{\xx}(M)\ar[r]  & 0.
		}
		\]
		The natural map $H^1_J(M) \to \check{H}^1_{\xx}(M)$ is injective and $ \check{D}^0_{\xx}(M)/\mathcal{D}_J(M) \cong 
	 	\check{H}^1_{\xx}(M)/H^1_J(M)$. Moreover,  $\rho_M$ is an isomorphism if $\xx$ is weakly 
	 	pro-regular. 
\end{lemma}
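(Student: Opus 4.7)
The plan is to assemble the diagram from the two four-term exact sequences recalled in \ref{not-1}(B) and \ref{not-1}(C), and then to read off the remaining assertions from the snake lemma applied to their right halves.

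First I would set up the left part of the diagram. The leftmost vertical is the equality $\Gamma_J(M) = \check{H}^0_{\xx}(M)$: since $J = (x_1,\ldots,x_k)R$, an element $m \in M$ is killed by some power of $J$ precisely when each $x_i$ kills a power of $m$, i.e., when $m \in \Ker d^0 = \check{H}^0_{\xx}(M)$; no Noetherian hypothesis is required. The middle vertical is the identity on $M$, and commutativity of the square involving $\rho_M$ is a direct computation: for $m \in M$ the element $\tau_M(m) \in \mathcal{D}_J(M)$ is represented by the homomorphism $J \to M$, $x \mapsto xm$, and $\rho_M$ sends it to $(x_i m/x_i)_{i=1}^k = (m/1)_{i=1}^k \in \check{D}^0_{\xx}(M)$, which is precisely the image of $m$ under the natural map $M \to \check{D}^0_{\xx}(M)$. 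The rightmost vertical is then defined as the induced map on cokernels, which automatically makes the final square commute.

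Next I would split the two four-term rows into short exact sequences whose left parts coincide by Step 1 and whose right parts read
\[
0 \to M/\Gamma_J(M) \to \mathcal{D}_J(M) \to H^1_J(M) \to 0, \qquad 0 \to M/\check{H}^0_{\xx}(M) \to \check{D}^0_{\xx}(M) \to \check{H}^1_{\xx}(M) \to 0.
\]
Applying the snake lemma to the resulting square, with $\rho_M$ as the middle map (injective by \ref{prop-1}) and the identity on the left, at once yields the injectivity of $H^1_J(M) \to \check{H}^1_{\xx}(M)$ together with the natural isomorphism $\Coker \rho_M \cong \Coker\bigl(H^1_J(M) \to \check{H}^1_{\xx}(M)\bigr)$, i.e., $\check{D}^0_{\xx}(M)/\mathcal{D}_J(M) \cong \check{H}^1_{\xx}(M)/H^1_J(M)$.

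For the last assertion the plan is to invoke the comparison theorem for weakly pro-regular sequences: when $\xx$ is weakly pro-regular, the natural map induces isomorphisms $H^i_J(M) \cong \check{H}^i_{\xx}(M)$ for every $R$-module $M$ and every $i \geq 0$ (see \cite{SS}). The rightmost vertical is then an isomorphism, the cokernel identification from the previous paragraph forces $\Coker \rho_M = 0$, and combined with the injectivity already noted $\rho_M$ is an isomorphism. The only mild obstacle is verifying the commutativity of the middle square in the full non-Noetherian generality required here, and citing the precise form of the weakly pro-regular comparison from \cite{SS}, since that non-Noetherian statement is exactly what makes the final step go through outside the Noetherian setting.
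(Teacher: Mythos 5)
Your proposal is correct and follows essentially the same route as the paper: identify the two four-term sequences from \ref{not-1}(B) and (C), use the injectivity of $\rho_M$ from \ref{prop-1} to deduce injectivity of the induced map on $H^1$ together with the cokernel isomorphism (the paper leaves this diagram chase implicit where you make it explicit via the snake lemma), and conclude the weakly pro-regular case from the comparison isomorphism $H^1_J(M) \cong \check{H}^1_{\xx}(M)$ of \cite[7.4.5]{SS}. Your extra verifications (the identification $\Gamma_J(M) = \check{H}^0_{\xx}(M)$ and the commutativity of the square involving $\rho_M$) are exactly the details the paper declares ``obvious.''
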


\begin{proof} 
	For the exact sequence at the bottom of the diagram see \ref{not-1} (B). The  exact sequence at the top  is shown in \ref{not-1} (C). The commutativity of the diagram is 
	obvious. Because the third vertical  map $ \mathcal{D}_J(M) \to 
	\check{D}^0_{\xx}(M)$ is injective (see \ref{prop-1}), so is the 
	fourth one. The statement  follows now. Finally if $\xx$ is weakly pro-regular, 
	then $H^1_J(M) \cong \check{H}^1_{\xx}(M)$ (see e.g. \cite[7.4.5]{SS}). 
\end{proof}

It could be of some interest to describe a necessary and sufficient condition for $\rho_M$ to 
become an isomorphism in terms of $\xx$ and the $R$-module $M$. 

\begin{proposition} \label{prop-4}
	With the previous notation there is a short exact sequence 
	\[
	0  \to \mathcal{D}_J(M) \stackrel{\rho_M}{\longrightarrow} \check{D}^0_{\xx}(M) 
	\to \check{H}^1_{\xx}(\mathcal{D}_J(M)) \to 0
	\]
	and an isomorphism $\check{H}^1_{\xx}(M)/H^1_J(M) \cong \check{H}^1_{\xx}(\mathcal{D}_J(M)) $.
	Moreover $\rho_M$ is an isomorphism if and only if $\check{H}^1_{\xx}(\mathcal{D}_J(M))= 0$. 
\end{proposition}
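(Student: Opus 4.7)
The plan is to apply the exact sequence from (B) of \ref{not-1} to $\mathcal{D}_J(M)$ itself, and then identify its middle term with $\check{D}^0_{\xx}(M)$ in such a way that the natural map becomes $\rho_M$. The first task, then, is to establish an isomorphism $\check{D}^0_{\xx}(\mathcal{D}_J(M)) \cong \check{D}^0_{\xx}(M)$. Since the functor $\check{D}_{\xx}(\cdot)$ is exact (localization is exact) and vanishes on every $J$-torsion module $N$ (each component $N_{x_{i_1} \cdots x_{i_p}}$ being zero), I would split the top row of \ref{lem-2} into the two short exact sequences $0 \to \Gamma_J(M) \to M \to M/\Gamma_J(M) \to 0$ and $0 \to M/\Gamma_J(M) \to \mathcal{D}_J(M) \to H^1_J(M) \to 0$, and apply $\check{D}_{\xx}(\cdot)$. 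The $J$-torsion terms $\Gamma_J(M)$ and $H^1_J(M)$ drop out, producing isomorphisms of complexes $\check{D}_{\xx}(M) \cong \check{D}_{\xx}(M/\Gamma_J(M)) \cong \check{D}_{\xx}(\mathcal{D}_J(M))$ induced by $\tau_M$; in particular $\check{D}^0_{\xx}(\mathcal{D}_J(M)) \cong \check{D}^0_{\xx}(M)$.

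The key step is to check that under this identification, the natural morphism $\mathcal{D}_J(M) \to \check{D}^0_{\xx}(\mathcal{D}_J(M))$ appearing in \ref{not-1} (B) coincides with $\rho_M$. To this end, given $\phi \in \mathcal{D}_J(M)$ represented by $\phi_n \in \Hom_R(J^n,M)$, I would verify the identity $x_i^n \phi = \tau_M(\phi_n(x_i^n))$ in $\mathcal{D}_J(M)$: both sides are represented by the homomorphism $a \mapsto \phi_n(x_i^n a)$ in $\Hom_R(J^n,M)$. Dividing by $x_i^n$ in $\mathcal{D}_J(M)_{x_i} \cong M_{x_i}$, the image of $\phi$ becomes $\phi_n(x_i^n)/x_i^n \in M_{x_i}$, which is precisely the $i$-th component of $\rho_M(\phi)$ defined in \ref{prop-1}.

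With these two ingredients in hand, the exact sequence of \ref{not-1} (B) applied to $\mathcal{D}_J(M)$ takes the form
\[
0 \to \check{H}^0_{\xx}(\mathcal{D}_J(M)) \to \mathcal{D}_J(M) \stackrel{\rho_M}{\longrightarrow} \check{D}^0_{\xx}(M) \to \check{H}^1_{\xx}(\mathcal{D}_J(M)) \to 0.
\]
Since $\rho_M$ is injective by \ref{prop-1}, necessarily $\check{H}^0_{\xx}(\mathcal{D}_J(M)) = 0$, and the desired short exact sequence of the statement drops out. The isomorphism $\check{H}^1_{\xx}(M)/H^1_J(M) \cong \check{H}^1_{\xx}(\mathcal{D}_J(M))$ then follows by equating the cokernel of $\rho_M$ computed above with the one already recorded in \ref{lem-2}, and the iff claim about $\rho_M$ is immediate. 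I expect the main obstacle to be the compatibility check in the second paragraph that the two natural maps from $\mathcal{D}_J(M)$ into $\check{D}^0_{\xx}(M)$ coincide, but this reduces to a direct calculation with the $R$-module structure on the ideal transform.
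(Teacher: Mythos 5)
Your proposal is correct and follows essentially the same route as the paper: one applies the four-term exact sequence of \ref{not-1}\,(B) to $\mathcal{D}_J(M)$ after identifying $\check{D}^0_{\xx}(\mathcal{D}_J(M))$ with $\check{D}^0_{\xx}(M)$ (the paper obtains this identification by localizing the sequence of \ref{not-1}\,(C) at $x\in J$ and invoking the inverse-limit description of \ref{prop-3}, whereas you use termwise exactness of $\check{D}_{\xx}(\cdot)$ and its vanishing on $J$-torsion modules, which is the same observation). Your explicit checks that the resulting natural map coincides with $\rho_M$ and that $\check{H}^0_{\xx}(\mathcal{D}_J(M))=0$ (via the injectivity from \ref{prop-1}) supply details the paper leaves implicit.
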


\begin{proof}
	Localizing the exact sequence of \ref{not-1} (C) at $x \in J$ yields $M_x \cong 
	 \mathcal{D}_J(M)_x $ and therefore %.  By virtue of \ref{prop-3} there are the isomorphisms 
	\[
	\check{D}^0_{\xx}(\mathcal{D}_J(M)) \cong \varprojlim{}_{x \in J} \mathcal{D}_J(M)_x 
	\cong \varprojlim{}_{x \in J}M_x \cong \check{D}^0_{\xx}(M)
	\] 
	(see \ref{prop-3}).
	By the exact sequence in \ref{not-1} (B) for the ideal transform $\mathcal{D}_J(M)$ it 
	yields the short exact sequence of the statement. 
\end{proof}

\noindent \textbf{Proof of Theorem \ref{thm-i}:} \\
\noindent (1): These statements are shown in Propositions \ref{prop-1}, \ref{prop-2} and \ref{prop-3}. \\
(2): We have that $\rho_M$ is an isomorphism if and only if $\theta_M$ is so. By \ref{prop-4} $\rho_M$ 
is an isomorphism if and only if $\check{H}^1_{\xx}(\mathcal{D}_J(M))= 0$. \\
(3):  If $\rho_M$ is an isomorphisms for any $R$-module it holds in particular for any injective 
$R$-module. In the diagram of \ref{lem-2} we have the vanishing  $H^1_J(I) = 0$ for any injective $R$-module $I$. 
That is, $\rho_I : \mathcal{D}_J(I) \to \check{D}^0_{\xx}(I)$ is an isomorphism for every 
injective $R$-module $I$ if and only if $\check{H}^1_{\xx}(I) = 0$ for every injective $R$-module $I$. 
By \ref{lem-1} this holds if and only if $\xx$ is a weakly secant sequence. 
Now we prove that $\rho_M$ is an isomorphism too. Let 
$0 \to M \to I^0 \to I^1$ be the beginning part of an injective resolution of $M$. It induces a commutative 
diagram with exact rows 
	\[
	\xymatrix{
	0 \ar[r] & \mathcal{D}_J(M) \ar[r]  \ar[d]^{\theta_M} & \mathcal{D}_J(I^0)  \ar[r] \ar[d]^{\theta_{I^0}}& 
	\mathcal{D}_J(I^1)   \ar[d]^{\theta_{I^1}}\\
	0 \ar[r] &  \varprojlim_{x \in J} M_x  \ar[r]  & \varprojlim_{x \in J} (I^0)_x \ar[r]  &\varprojlim_{x \in J}  (I^1)_x. 
	}
	\]
For the first exact sequence recall $\mathcal{D}_J(M) = \ker(\mathcal{D}_J(I^0)  \to \mathcal{D}_J(I^1))$. 
For the second note that localization is exact and passing to the inverse limit
left exact. If $\xx$ is weakly secant, then $\theta_{I^i}, i = 0,1,$ is an isomorphism and $\theta_M$ too. 
 \hfill $\square$
\medskip

We conclude with an explicit example of a ring and an  injective module such that $\rho$ 
is not an isomorphism.

\begin{example} \label{ex-1} (see also \cite[Example 5.5]{SS1})
	Let $R = \Bbbk[[x]]$ denote the power series ring in one variable over 
	the field $\Bbbk$. Let $E = E_R(\Bbbk)$ denote the injective hull 
	of the residue field. Then define $S = R \ltimes E$, the idealization  
	of $R$ by the $R$-module $E$ as introduced by M. Nagata (see \cite{Nm}). That is, $S=R\oplus E$ as an 
	$R$-module with a multiplication on $S$  defined by $(r, r)\cdot (r', e')=(rr', re'+r'e)$ for all $r, r' \in R$ and $e, e'\in E$.	
	By a result of Faith \cite{Fc} we have that the commutative ring $S$ is self-injective. More precisely, 
	there is an isomorphism of $S$-modules $\Hom_R(S, E)\cong S$  
	(see also  \cite[Theorem A.4.6]{SS}). We consider the ideal $J :=(x, 0)S$ 
	of $S$ and note $\Gamma_J(S) =0\ltimes E$. Then 
	$\Gamma_J(S)$ is not injective as an $S$-module (see \cite[2.8.8]{SS}).
	We have that $S$ is self-injective with 
	$\Gamma_J (S) = 0 \ltimes E$. Hence $\mathcal{D}_J(S) =R \subsetneqq
	\check{D}^0_{(x,0)}(S) =S_{(x, 0)}=R_x$. 
	
	In this example the ascending sequence of ideals $0:_S(x, 0)^t = 0 \ltimes (0:_E x^t), t>0$
	does not stabilizes. That is, $S$ is not of bounded $(x,0)$-torsion and therefore the inverse 
	system $\{H_1((x,0)^n;S)\}_{n \geq 1}$ is not pro-zero.
\end{example}

\section{The associated sheaf}

In this part of the paper we shall recall the sheaf construction 
of $\tilde{M}(U)$ for a finitely generated ideal $J = (x_1,\ldots,x_k)R$ 
in a commutative ring $R$ and an $R$-module $M$. Note that it is closely related 
to the cohomological investigations.  Set $X = \Spec R$ and $U = 
X \setminus V(J)$. Note that $D(x_i) = X \setminus V(x_i), i = 1,\ldots,k,$ is an open 
covering of $U$. 

\begin{defob} \label{app-1}
	For the $R$-module $M$ we consider the set $\{M_x\}_{x\in J}$. Because we are 
	interested in the localization $M_x, x \in J,$ we may replace $M$ by $M/0:_M \la J \ra$, where 
	$0:_M \la J \ra = \cup_{n \geq 1} 0:_M J^n$. Note that $M_x = M_y$ for $x,y \in J$ with 
	$\Rad xR = \Rad yR$. For elements $x,y \in J$ with 
	$x \in \Rad yR$, i.e., $x^k = y r$ for some $r \in R$ we have as above  a homomorphism $\alpha_{x,y} : M_y\to M_x$ (see \ref{xyz} ). 
	Now recall that  $\{D(x_i) | i = 1,\ldots,k\},$ is an affine 
	covering of $U$.
	
	\textit{Claim:}
	 $\tilde{M}(U):= (\{M_x\}_{x \in J}, \{\alpha_{x,y}| \Rad xR 
	\subseteq \Rad yR\}) = (\{M_x\}_{x \in J}, \{\alpha_{x,y}| D(x)
	\subseteq D(y)\})$ is a sheaf of modules. 
	
	\textit{Proof:}
	To this end we have to show the following:
	\begin{enumerate}
		\item If $m/x^n \in M_x$ maps to zero in $R_{x_i}$ for all $i = 1,\ldots,k,$ then $m=0$.
		\item  If $m_j/x_j^n\in M_{x_j} , j=1,\ldots,k,$ satisfies $m_i/x_i^n = m_j/x_j^n$ in $M_{x_ix_j}$
		for all $i,j$.
		Then there is an $n/y \in M_y$  that maps to $ m_i/x_i^n$ for all $i = 1,\ldots,k$.
	\end{enumerate}
	If $m/x^n$ maps to zero in $R_{x_i}$, then $x_i^c m= 0$ for all $i = 1,\ldots,k$, 
	where $c$ can be chosen independently of $i$. That is $J^{ck} m= 0$ and $m=0$ and (1) holds. 
	In order to show (2) note that we may 
	chose $n$ independently of $i,j$. Then the  assumption implies $(x_ix_j)^cx_j^nm_i = (x_ix_j)^cx_i^nm_j$ for a certain $c$ for all $i,j$. We put $m_i' = x_i^cm_i$ and get 
	$m_i/x_i^n = m_i'/x_i^{c+n}$ and $x_j^{c+n}m_i' = x_i^{c+n} m_j'$ for all $i,j$. Now we choose 
	$y = \sum_{i=1}^{k}r_ix_i^{c+n}$ and $m = \sum_{j=1}^{k} r_j m_j'$. Then 
	\[
	x_i^{c+n} m =  \small{\sum}_{j=1}^{k} r_j x_i^{c+n}m_j' =  \small{\sum}_{j=1}^{k}  r_j x_j^{c+n} m_i' 
	= y m_i'
	\]
	and $m/y = m_i'/x_i^{c+n} = m_i/x_i^n$ for all $i = 1,\ldots,k$. 
\end{defob}

Moreover, clearly $\tilde{M}(U)$ is a sheaf of $\tilde{A}(U)$-modules. 
We conclude with the obvious remark that $\tilde{M}(U)$ coincides with 
$\check{D}^0_{\xx}(M)$  for a finitely generated ideal.

\begin{corollary} \label{cor-1}
	For an $R$-module $M$ and an ideal $J \subset R$ we have $\check{D}^0_{\xx}(M) 
	\cong \tilde{M}(U)$, where $\xx =x_1,\ldots,x_k, J = (x_1,\ldots,x_k)R$ and $U = X \setminus V(J)$.
\end{corollary}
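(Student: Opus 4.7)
The plan is to read off the isomorphism straight from the sheaf axioms already verified in the Claim of \ref{app-1}. Since $U = \bigcup_{i=1}^{k} D(x_i)$ is a finite affine cover with pairwise intersections $D(x_i) \cap D(x_j) = D(x_ix_j)$, the sections of $\tilde{M}$ over $U$ are forced to be the equalizer
\[
\tilde{M}(U) \;=\; \ker\Bigl(\bigoplus_{i=1}^{k} M_{x_i} \longrightarrow \bigoplus_{1 \leq i < j \leq k} M_{x_i x_j}\Bigr).
\]
Items (1) and (2) of the Claim in \ref{app-1} are precisely the two sheaf axioms for this finite cover: (1) gives the injectivity of the canonical map $\tilde{M}(U) \hookrightarrow \bigoplus_i M_{x_i}$, and (2) gives the existence of a gluing, i.e.\ surjectivity onto the kernel. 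But the right-hand side of the display is, by \ref{not-1}(B), exactly $\check{D}^0_{\xx}(M) = \ker d^1$, and the corollary follows.

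A second route, which is in the spirit of Section~3, is to recognize that the sheaf construction of \ref{app-1} identifies $\tilde{M}(U)$ with $\varprojlim_{x \in J} M_x$, because the $D(x)$ with $x \in J$ form a basis of open subsets of $U$ whose transition maps are the $\alpha_{x,y}$. Proposition \ref{prop-3} then supplies the isomorphism $\sigma_M : \varprojlim_{x \in J} M_x \xrightarrow{\sim} \check{D}^0_{\xx}(M)$, which finishes the argument.

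As the author himself flags this as an ``obvious remark,'' there is no real obstacle. The only point worth noting is that the two descriptions agree: the equalizer over the finite cover $\{D(x_i)\}_{i=1}^k$ coincides with the inverse limit over the cofinal family $\{D(x) : x \in J\}$, which is immediate since $(x_1,\ldots,x_k)$ generates $J$ and hence dominates every $x \in J$ under the partial order $y \geq x \iff x \in \Rad yR$ used in \ref{xyz}.
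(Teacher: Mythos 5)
Your first route is exactly the paper's own argument: the proof of Corollary~\ref{cor-1} takes $f=(m_i/x_i^n)\in\ker d^1$, invokes condition (2) of \ref{app-1} to glue it to a section and condition (1) for injectivity, which is precisely the equalizer identification you describe. The proposal is correct and takes essentially the same approach (your second route via $\varprojlim_{x\in J}M_x$ and $\sigma_M$ is a harmless restatement of what Proposition~\ref{prop-3} already provides).
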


\begin{proof} 
	Let $f = (m_i/x_i^n)_{i=1}^k \in \check{D}^0_{\xx}(M)$ and therefore $m_i/x_i^n = m_j /x_j^n$ 
	for all $i,j$. By virtue of (2) in  \ref{app-1} there is an $n/y \in M_y$ that maps to $m_i/x_i^n$ 
	for all $i = 1,\ldots,k$. Whenever, $n/y$ maps to zero by the map  of \ref{app-1}, the  condition 
	(1) in \ref{app-1} implies that $n = 0$. So, there is an isomorphism  $\check{D}^0_{\xx}(M) 
	\cong \tilde{M}(U)$. 
\end{proof}

	{\bfseries{Acknowledgement.}} Thanks to the reviewer for the careful reading of the manuscript 
and the suggestions. 

\bibliographystyle{siam}

\bibliography{hart-1}

\begin{thebibliography}{10}

\bibitem{ALL1}
{\sc L.~{Alonso Tarr\'{\i}o}, A.~{Jerem\'{\i}as L\'opez}, and J.~{Lipman}},
  {\em {Local homology and cohomology on schemes.}}, {Ann. Sci. \'Ec. Norm.
  Sup\'er. (4)}, 30 (1997), pp.~1--39.

\bibitem{Bn2}
{\sc N.~{Bourbaki}}, {\em {\'El\'ements de math\'ematiques. Alg\`ebre. Chapitre
  10: Alg\`ebre homologique.}}
\newblock {Paris etc.: Masson. VII, 216 p.}, 1980.

\bibitem{BrS}
{\sc M.~{Brodmann} and R.~{Sharp}}, {\em {Local cohomology. An algebraic
  introduction with geometric applications. 2nd ed.}}, Cambridge: Cambridge
  University Press, 2nd ed.~ed., 2012.

\bibitem{Ei}
{\sc I.~{Emmanouil}}, {\em {Mittag-Leffler condition and the vanishing of
  \(\varprojlim^ 1\)}}, {Topology}, 35 (1996), pp.~267--271.

\bibitem{Fc}
{\sc C.~Faith}, {\em Self-injective rings}, Proc. Am. Math. Soc., 77 (1979),
  pp.~157--164.

\bibitem{GM}
{\sc J.~{Greenlees} and J.~{May}}, {\em {Derived functors of $I$-adic
  completion and local homology.}}, {J. Algebra}, 149 (1992), pp.~438--453.

\bibitem{Hr1}
{\sc R.~{Hartshorne}}, {\em {Residues and duality. Appendix: Cohomologie \`a
  support propre et construction du foncteur $f\sp!$. par P. Deligne.}}
\newblock {Lecture Notes in Mathematics. 20. Berlin-Heidelberg-New York:
  Springer-Verlag, 423 p.}, 1966.

\bibitem{Hr5}
\leavevmode\vrule height 2pt depth -1.6pt width 23pt, {\em Algebraic geometry.
  {Corr}. 3rd printing}, vol.~52 of Grad. Texts Math., Springer, Cham, 1983.

\bibitem{Nm}
{\sc M.~{Nagata}}, {\em {Local rings. Corr. reprint of the orig. ed. 1962.}}
\newblock {Huntington, N.Y.: Robert E. Krieger Publ. Co. XIII, 234 p. (1975).},
  1975.

\bibitem{Sp2}
{\sc P.~{Schenzel}}, {\em {Proregular sequences, local cohomology, and
  completion}}, {Math. Scand.}, 92 (2003), pp.~161--180.

\bibitem{Sp12}
\leavevmode\vrule height 2pt depth -1.6pt width 23pt, {\em About proregular
  sequences and an application to prisms}, Commun. Algebra, 49 (2021),
  pp.~4687--4698.

\bibitem{Sp14}
\leavevmode\vrule height 2pt depth -1.6pt width 23pt, {\em On pro-zero
  homomorphisms and sequences in local (co-)homology}, J. Algebra, 652 (2024),
  pp.~94--112.

\bibitem{SS}
{\sc P.~{Schenzel} and A.-M. {Simon}}, {\em {Completion, \v{C}ech and local
  homology and cohomology. Interactions between them.}}, Cham: Springer, 2018.

\bibitem{SS1}
\leavevmode\vrule height 2pt depth -1.6pt width 23pt, {\em {Torsion of
  injective modules and weakly pro-regular sequences}}, {Commun. Algebra}, 48
  (2020), pp.~3637--3650.

\bibitem{StV}
{\sc J.~St{\"u}ckrad and W.~Vogel}, {\em Buchsbaum rings and applications. {An}
  interaction between algebra, geometry, and topology}.
\newblock Mathematische {Monographien}, {Bd}. 21. {Berlin}: {VEB} {Deutscher}
  {Verlag} der {Wissenschaften}. 286 p. (1986)., 1986.

\end{thebibliography}

\end{document}